\theoremstyle{plain}
\newtheorem{theorem}{Theorem}
\newtheorem{proposition}{Proposition}
\newtheorem{corollary}{Corollary}
\theoremstyle{definition}
\newtheorem{definition}{Definition}
\newtheorem{remark}{Remark}
\def\Z{\mathbb Z}
\def\s{\textstyle{\frac{p-1}2}}
\def\sn{\textstyle{\frac{n-1}2}}
\def\sf{\operatorname{sf}}      
\def\H{\operatorname{H}}
\title[Double-, Hyper-, Sub- and Super-factorials]{Wilson Theorems for \\Double-, Hyper-, Sub- and Super-factorials}
\author{Christian Aebi}
\author{Grant Cairns}
\address{Coll\`ege Calvin, Geneva, Switzerland 1211}
\email{christian.aebi@edu.ge.ch}
\address{Department of Mathematics and Statistics, La Trobe University, Melbourne, Australia 3086}
\email{G.Cairns@latrobe.edu.au}
\begin{document}

\maketitle

\begin{abstract}
We present generalisations of Wilson's theorem for double factorials, hyperfactorials, subfactorials and superfactorials.
\end{abstract}

\section{Introduction}

Wilson's theorem states that  $(p-1)!\equiv -1$ if $p$ is prime, and $(p-1)!\equiv 0$ otherwise, except for the one special case, $p=4$. 
 The result is attributed to John Wilson, a student of Waring, but it has apparently been known for over a thousand years; see  \cite{Ra}, \cite[Ch. II]{Dav}, \cite[Ch. 11]{Ore}, \cite[Chap.~3]{Dick1} and \cite[Chap.~3.5]{St}. The first  proof was given by Lagrange \cite{La}. We give generalisations of Wilson's theorem for the four standard generalisations of the factorial function.
 
\begin{definition} For a natural number $n$,
\begin{enumerate}
\item  the \emph{double factorial}  $n!!$ is the product of the natural numbers less than or equal to $n$ that have the same parity as $n$, 
\item  the \emph{hyperfactorial}  $\H(n)$ is the number $\H(n)=\prod_{k=1}^nk^k$, 
\item  the \emph{subfactorial}  $!n$ is  the number of permutations  of the set $\{1,2,\dots,n\}$ that fix no element, 
\item  the \emph{superfactorial}  $\sf(n)$ is  the number  $\sf(n)=\prod_{k=1}^nk!$ 
\end{enumerate}
\end{definition}

The idea underlying our approach is the following: if there are sensible versions of Wilson's Theorem for these functions, then the numbers involved must be very special. If $p$ is prime, then the obvious \emph{special} numbers in the field $\Z_p$ are $0,\pm1$ and when $p$ is congruent to $1$ mod $4$, the two square roots of $-1$. So the task is to verify that these are the values taken, and determine which value is taken in each case. We find that this simple approach for prime $p$ works surprisingly well. For composite numbers, the hyperfactorial, subfactorial and superfactorial functions are all easily treated, while the double factorial holds some unexpected interesting surprises. 
 We present the results for double factorials in Theorems  \ref{T:3mod4}, \ref{T:oddprime}, \ref{T:comp} and \ref{T:even}. The results for the superfactorial, hyperfactorial and subfactorial are given in Theorems \ref{SFDF}, \ref{hyp} and \ref{sub} respectively.  The following result is an unexpected consequence of this study.
 
 \begin{theorem} If $p$ is an odd prime, then modulo $p$
 \[
 (p-1)!!\equiv \sf(p-1)\equiv (-1)^{\frac{p-1}2} \H(p-1).
 \]
 \end{theorem}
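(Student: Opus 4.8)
The plan is to reduce all three quantities, modulo $p$, to a common closed form involving $m!$, where I write $m=\frac{p-1}{2}$. First I note the elementary fact that, since $p$ is odd, $(p-1)!!=2\cdot4\cdots(p-1)=2^{m}\,m!$. The workhorse for the remaining two functions is the reflection identity $k!\,(p-1-k)!\equiv(-1)^{k+1}\pmod p$, valid for $0\le k\le p-1$; this follows from Wilson's theorem by writing $(p-1)!=(p-1-k)!\prod_{j=1}^{k}(p-j)$ and using $\prod_{j=1}^{k}(p-j)\equiv(-1)^{k}k!$.

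Next I would evaluate $\sf(p-1)=\prod_{k=1}^{p-1}k!$. Splitting the product at the midpoint and reindexing the upper half by $k\mapsto p-1-k$, the reflection identity turns the upper half into $\prod_{j=0}^{m-1}(-1)^{j+1}/j!$; the factorials then cancel against $\prod_{k=1}^{m}k!$ up to a single surviving factor $m!$, leaving $\sf(p-1)\equiv(-1)^{m(m+1)/2}\,m!\pmod p$. Comparing this with $(p-1)!!=2^{m}m!$ and cancelling the nonzero factor $m!$, the congruence $(p-1)!!\equiv\sf(p-1)$ becomes the assertion $2^{m}\equiv(-1)^{m(m+1)/2}\pmod p$. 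This is precisely Euler's criterion combined with the second supplement to quadratic reciprocity $\left(\frac{2}{p}\right)=(-1)^{(p^2-1)/8}$, once one records the identity $\frac{m(m+1)}{2}=\frac{(p-1)(p+1)}{8}=\frac{p^2-1}{8}$.

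Finally I would treat $\H(p-1)=\prod_{k=1}^{p-1}k^{k}$ by pairing $k$ with $p-k$, a fixed-point-free involution of $\{1,\dots,p-1\}$ as $p$ is odd. Using $(p-k)^{p-k}\equiv(-1)^{p-k}k^{p-k}$ and Fermat's little theorem $k^{p}\equiv k$, each pair contributes $k^{k}(p-k)^{p-k}\equiv(-1)^{k+1}k\pmod p$, and multiplying over $k=1,\dots,m$ yields $\H(p-1)\equiv(-1)^{m}(-1)^{m(m+1)/2}\,m!$. Hence $(-1)^{m}\H(p-1)\equiv(-1)^{m(m+1)/2}\,m!\equiv\sf(p-1)$, which closes the chain of congruences. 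I expect the superfactorial step to be the main obstacle: one must manage the telescoping cancellation carefully and, above all, recognise that the leftover comparison $2^{m}\equiv(-1)^{m(m+1)/2}$ is the second supplement in disguise. The hyperfactorial pairing is comparatively routine, and I would sanity-check all signs on small primes such as $p=5,7,11$.
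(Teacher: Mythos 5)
Your proof is correct, but it differs from the paper's in two instructive ways. For the superfactorial, your midpoint cancellation giving $\sf(p-1)\equiv(-1)^{(p^2-1)/8}\,\frac{p-1}{2}!\pmod p$ is exactly the computation the paper records in Remark \ref{R:GR}; the divergence is in the logical direction of the next step. You bridge the gap to $(p-1)!!=2^{(p-1)/2}\,\frac{p-1}{2}!$ by importing Euler's criterion together with the second supplement of quadratic reciprocity, $\left(\frac{2}{p}\right)=(-1)^{(p^2-1)/8}$, whereas the paper proves $\sf(p-1)\equiv(p-1)!!$ from Wilson's theorem alone: it writes $\sf(p-1)=(p-1)!!\,\bigl((p-2)!\,(p-4)!\cdots 3!\,1!\bigr)^2$, cancels the factorials inside the square with the same reflection identity you use, handles the possible leftover middle term via $\bigl(\frac{p-1}{2}!\bigr)^2\equiv(-1)^{(p+1)/2}$, and then runs your comparison \emph{backwards} to derive $2^{(p-1)/2}\equiv(-1)^{(p^2-1)/8}$ as a corollary. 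Your argument is not circular---the second supplement has independent proofs, e.g.\ via Gauss's lemma---but it invokes as an external ingredient exactly what the paper obtains as a byproduct, so it is less self-contained and forfeits that bonus. For the hyperfactorial your route is genuinely different and arguably more elementary: you pair $k$ with $p-k$ and apply Fermat's little theorem termwise to reduce $\H(p-1)$ to $(-1)^{\frac{p-1}{2}}(-1)^{(p^2-1)/8}\,\frac{p-1}{2}!$, while the paper manipulates the identity $\H(p-1)\cdot\sf(p-2)=((p-1)!)^{p-1}$ with Fermat and Wilson to get $\H(p-1)\equiv-((p-1)!!)^{-1}$, and then removes the inverse using $((p-1)!!)^2\equiv(-1)^{(p+1)/2}$. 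In sum, the paper's approach buys self-containedness (Wilson only) plus the second supplement for free, while yours buys uniformity, reducing all three quantities to explicit multiples of $\frac{p-1}{2}!$, at the cost of assuming quadratic reciprocity's second supplement.
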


\section{Factorials: double, hyper, sub and super}

According to the  MacTutor History of Mathematics archive, the name \emph{factorial}  and the notation $n!$ were introduced by the French mathematician Christian Kramp in 1808 \cite{Kr}, but the symbol was not immediately universally adopted. In the English speaking world, the notation ${\mathsmaller{\lfloor}}\underbar{n}$ was still commonly used  at the end of the $19^{th}$ Century \cite{Caj}. De Morgan wrote ``Among the worst of barbarisms\footnote{At some point, the word barbarisms was misspelt as barabarisms when quoted and this error has been reproduced in a great number of places.} is that of introducing symbols which are quite new in mathematical, but perfectly understood in common, language. Writers have borrowed from the Germans the abbreviation $n!$ to signify $1.2.3\ldots(n - 1).n$, which gives their pages the appearance of expressing surprise and admiration that $2, 3, 4, \&c$.~should be found in mathematical results'' \cite{DeM}.

Rev.~W.~Allen~Whitworth\footnote{
There is an amusing Australian connection concerning Whitworth. In the Cairns Post,  22 November 1890, an article reads: ``The Rev.~W.~Allen Whitworth, in attempting a definition of gambling, as separable from mercantile  speculation and legitimate enterprise, committed himself to the
proposition that under ordinary circumstances
and within the limits of moderation, it is (1)
justifiable to back one's skill, (2) foolish to
back one's luck, (3) immoral or fraudulent
to back one's knowledge. There is no rule
(4), but had there been it would doubtless
have read -- highly commendable to back
Carbine''.  Carbine was the horse that won the Melbourne Cup in 1890.} apparently didn't share De~Morgan's view. 
Whitworth introduced the  \emph{subfactorial} and a symbol for it, in a paper  that begins with the words: ``A new symbol in algebra is only half a benefit unless it has a new name. We believe that the symbol ${\mathsmaller{\lfloor}}\underbar{n}$ as an abbreviation of the continued product of the first $n$ integers, was long in use before the name \emph{factorial} $n$ was adopted. But until it received its name it appealed only to the eye and not to the ear, and in reading aloud could only be described by a periphrasis'' \cite{Whit}. Subfactorial $n$ is the number of permutations  of the set $\{1,2,\dots,n\}$ that fix no element. There are many symbols for the subfactorial. Whitworth used the symbol $\underline{{\mathsmaller{\lfloor}}\underbar{n}}$, in keeping with the notation for the factorial at the time. These days $n\text{!`}$ is sometimes used, but $!n$ seems more common. De~Morgan would not be happy!

The subfactorial has the following explicit formula.
\begin{equation}\label{ELsub}
!n=n!\sum_{i=0}^n \frac{(-1)^i}{i!}.
\end{equation}
The hyperfactorial function was also introduced in the $19^{th}$ Century \cite{Gl,Ki} and the superfactorial function
followed at the turn of the Century \cite{Ba}, although the terms were introduced much later \cite{Sl,GKP}.

Study of the double factorial goes back at least as far as 1948 \cite{Me}, but it is probably much older. 
For even $n$, one has
\begin{equation}\label{E:df}
n!!=2^{\frac{n}2}\cdot  {\textstyle{\frac{n}2}}!
\end{equation}
For $n$ odd, $n!!$  coincides with the Gauss factorial $n_2!$, where  in general $n_m!$ is the product of the natural numbers $i\leq n$ that are relatively prime to $m$ \cite{cd2}.
For a recent paper on the properties of double factorials and their applications, see \cite{GQ}.

\section{Motivation}\label{S:mot}

Our motivation for considering the various factorial functions concerns the matrix
\[
A=
\begin{pmatrix}
1&2&3&\dots &p-1\\
1&2^2&3^2&\dots& (p-1)^2\\
1&2^3&3^3&\dots& (p-1)^3\\
\vdots&\vdots&\vdots&&\vdots\\
1&2^{p-1}&3^{p-1}&\dots& (p-1)^{p-1}
\end{pmatrix}.
\]
There are many interesting facts and open questions related to the properties of the matrix $A$ modulo $p$. For example, \emph{Giuga's conjecture} is that the sum of the entries in the bottom row is congruent to $-1$ if and only if $p$ is prime \cite{BBBG}.

\begin{proposition}\label{L:matrix} The above matrix $A$ has determinant $\sf(p-1)$.
\end{proposition}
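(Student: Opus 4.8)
The plan is to reduce $A$ to a Vandermonde matrix by a single column operation and then evaluate the resulting Vandermonde determinant as a product of factorials. Writing the $(i,j)$ entry of $A$ as $j^{\,i}$, where the row index $i$ and the column index $j$ both run from $1$ to $p-1$, I observe that every entry in the $j$-th column is divisible by $j$. Factoring one power of $j$ out of column $j$, for each $j$, pulls out the scalar $\prod_{j=1}^{p-1} j = (p-1)!$ and leaves the matrix $B$ whose $(i,j)$ entry is $j^{\,i-1}$.

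The matrix $B$ is a Vandermonde matrix on the nodes $1,2,\dots,p-1$ (up to transposition, which does not affect the determinant), so
\[
\det B=\prod_{1\le j<k\le p-1}(k-j).
\]
Since the nodes are increasing, every factor $k-j$ is positive and no sign ambiguity arises.

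The only genuine computation is turning this product into factorials, and the bookkeeping is clean if one groups the factors by the larger index $k$: for each fixed $k$ the inner product $\prod_{j=1}^{k-1}(k-j)$ runs through $k-1,k-2,\dots,1$ and hence equals $(k-1)!$. Therefore
\[
\det B=\prod_{k=2}^{p-1}(k-1)!=\prod_{m=1}^{p-2}m!=\sf(p-2).
\]
Combining with the column factor gives
\[
\det A=(p-1)!\cdot\sf(p-2)=\prod_{k=1}^{p-1}k!=\sf(p-1),
\]
as claimed. I expect no real obstacle: the argument is entirely elementary once one spots the column factoring, and the only step needing a moment's care is the Vandermonde product, which the regrouping by $k$ dispatches immediately.
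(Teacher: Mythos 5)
Your proof is correct and follows essentially the same route as the paper: factor $j$ out of each column $j$ to pull out $(p-1)!$, evaluate the remaining Vandermonde determinant as $\prod_{1\le j<k\le p-1}(k-j)$, and regroup the factors into $\prod_{k=2}^{p-1}(k-1)!=\sf(p-2)$, giving $\det A=(p-1)!\cdot\sf(p-2)=\sf(p-1)$. The only cosmetic difference is that you phrase the reduced matrix as the transpose of the paper's Vandermonde matrix, which of course changes nothing.
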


\begin{proof} Using the formula for the Vandermonde determinant \cite[Chap.~1.2]{Pr} we have
\begin{align*}
\det(A)&=(p-1)!\ \det
\begin{pmatrix}
1&1&1&\dots &1\\
1&2&3&\dots &p-1\\
1&2^2&3^2&\dots& (p-1)^2\\
\vdots&\vdots&\vdots&&\vdots\\
1&2^{p-2}&3^{p-2}&\dots& (p-1)^{p-2}
\end{pmatrix}\\
&=(p-1)! \prod_{1\leq i < j\leq p-1} (j-i)\\
&=(p-1)! \prod_{2\leq j\leq p-1} (j-1)!=\sf(p-1).
\end{align*}
\end{proof}

\begin{remark}\label{diag}
Notice that $\H(p-1)$ is precisely the product of the elements on the main diagonal of $A$. We will return to the matrix $A$ briefly in Section \ref{Sdf}.
\end{remark}

\section{The connection between the superfactorial and the double factorial}

Somewhat surprisingly, for prime $p$, the values $\sf(p-1)$ and $(p-1)!!$ are congruent.
Before proving this result, let us recall a well known fact which Lagrange proved in \cite{La} (see \cite{GQ,AC1}). Let $p$ be an odd prime. Then 
\begin{equation}\label{R:wil}
\left(\s!\right)^2\equiv(-1)^{\frac{p+1}2}\pmod p.
\end{equation}
Indeed, modulo $p$, one has $p-i\equiv -i$ for all $i$ and so $(p-1)!\equiv(-1)^{\frac{p-1}2}\left(\s!\right)^2$ and the required claim follows from Wilson's Theorem. 

\begin{theorem}\label{SFDF} If $p$ is prime, then $\sf(p-1)\equiv (p-1)!!\pmod p$.
\end{theorem}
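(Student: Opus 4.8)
The plan is to reduce both sides of the congruence to the single factorial $\s!$, which also appears in the double-factorial formula \eqref{E:df} (though, interestingly, the value of its square furnished by \eqref{R:wil} will not be needed). First I would dispose of $p=2$ by direct computation, since $\sf(1)=1=(1)!!$, and assume $p$ odd throughout. Because $0!=1$, I would write the superfactorial as $\sf(p-1)=\prod_{k=0}^{p-1}k!$ and exploit the involution $k\mapsto p-1-k$ on the index set $\{0,1,\dots,p-1\}$. This involution has the single fixed point $k=\s$ and otherwise partitions the indices into the $\s$ pairs $\{k,\,p-1-k\}$ with $0\le k\le\frac{p-3}2$, so that
\[
\sf(p-1)=\left(\s!\right)\prod_{k=0}^{\frac{p-3}2} k!\,(p-1-k)!.
\]

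The second step is to evaluate each paired factor modulo $p$. Using $\binom{p-1}{k}\equiv(-1)^k\pmod p$, which follows from $p-i\equiv -i$, together with Wilson's theorem $(p-1)!\equiv-1$, I obtain
\[
k!\,(p-1-k)!=\frac{(p-1)!}{\binom{p-1}{k}}\equiv(-1)^{k+1}\pmod p.
\]
Multiplying these over $0\le k\le\frac{p-3}2$ and summing the exponents gives $\sum_{k=0}^{(p-3)/2}(k+1)=\frac{p^2-1}{8}$, whence $\sf(p-1)\equiv(-1)^{\frac{p^2-1}8}\,\s!\pmod p$.

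The final step is to match this against the double factorial. For the even number $p-1$, formula \eqref{E:df} gives $(p-1)!!=2^{\frac{p-1}2}\,\s!$, so it remains only to identify the sign $(-1)^{\frac{p^2-1}8}$ with the power $2^{\frac{p-1}2}$ modulo $p$. This is exactly Euler's criterion combined with the second supplement to quadratic reciprocity: $2^{\frac{p-1}2}\equiv\left(\frac{2}{p}\right)\equiv(-1)^{\frac{p^2-1}8}\pmod p$. Substituting then yields $\sf(p-1)\equiv 2^{\frac{p-1}2}\,\s!=(p-1)!!$, as required.

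I expect the main obstacle to be conceptual rather than computational: recognising that the sign accumulated from the pairing is precisely the quantity $(-1)^{(p^2-1)/8}$ appearing in the second supplement, and thereby routing the superfactorial through the Legendre symbol $\left(\frac{2}{p}\right)$. Once that identification is in hand, the remaining bookkeeping—the fixed-point count, the evaluation of $\binom{p-1}{k}$, and the triangular-number exponent sum—is routine. A convenient sanity check is $p=5$, where $\sf(4)=288\equiv 3$ and $4!!=8\equiv 3\pmod 5$.
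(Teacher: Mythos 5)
Your proof is correct, but it takes a genuinely different route from the paper's. The paper first extracts the double factorial directly, via the identity $(p-1)!\,(p-2)!\cdots 1! = (p-1)!!\,\left((p-2)!\,(p-4)!\cdots 3!\,1!\right)^2$ (equation \eqref{E:dfac}), and then kills the squared factor by cancelling its factorials in pairs using the Wilson-based congruence $(p-i)!\,(i-1)!\equiv(-1)^i$ (equation \eqref{E:canc}), the possible leftover middle term $\left(\s!\right)^2$ being handled by Lagrange's identity \eqref{R:wil}; that argument is self-contained modulo Wilson's theorem, and no quadratic reciprocity enters. Your argument instead runs the symmetric pairing $k!\leftrightarrow(p-1-k)!$ over the whole superfactorial --- which is exactly the computation of the paper's Remark \ref{R:GR}, yielding $\sf(p-1)\equiv(-1)^{\frac{p^2-1}8}\,\s!$, i.e.\ equation \eqref{E:gt} --- and then closes the gap to $(p-1)!!=2^{\frac{p-1}2}\,\s!$ by importing Euler's criterion together with the second supplement to quadratic reciprocity, $2^{\frac{p-1}2}\equiv(-1)^{\frac{p^2-1}8}$. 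The notable difference is the logical direction: the paper deliberately avoids the second supplement in its proof and then, in Remark \ref{R:GR}, combines the theorem with precisely your pairing to \emph{derive} the second supplement as a corollary; your proof consumes that classical result as an input. There is no circularity in doing so, since the second supplement has independent proofs (e.g.\ via Gauss's lemma), but the paper's route is the more elementary one and yields the reciprocity statement as a bonus, whereas yours is shorter if one already grants it.
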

\begin{proof} The theorem is obvious for $p=2$.  In the following proof, $p$ is odd and the congruences are taken modulo $p$.
We have
\begin{align}
\sf(p-1)&=(p-1)!\,(p-2)!\,(p-3)!\,\dots\,3!\,2!\,1!\notag\\
&=(p-1)!!\,\left((p-2)!\,(p-4)!\,\dots\,3!\,1!\right)^2.\label{E:dfac}
\end{align}
Note that for all $1\leq i\leq p-1$,
\[
(p-i)! =\frac{(p-1)!}{(p-1)(p-2)\dots(p-i+1)}\equiv \frac{-1}{(-1)(-2)\dots(-i+1)}.
\]
Hence
\begin{equation}\label{E:canc}
(p-i)! \equiv \frac{(-1)^i}{(i-1)!}.
\end{equation}
So if $\frac{p-1}2$ is even,  the factorials  in \eqref{E:dfac} cancel in pairs, giving $\sf(p-1) \equiv (p-1)!!$, as required. If $\frac{p-1}2$ is odd, cancellation of the factorials in \eqref{E:dfac} leaves the middle term, giving $\sf(p-1)  \equiv (p-1)!!\ \left (\s!\right)^2$ and hence $\sf(p-1)  \equiv (p-1)!!$, by \eqref{R:wil}.
\end{proof}

\begin{remark}\label{R:GR}
In the above proof,   the factorials in $\sf(p-1)$ can also be cancelled in pairs using \eqref{E:canc}, leaving only the middle one, and so
\begin{equation}\label{E:gt}
\sf(p-1)\equiv(-1)^{\sum_{i=1}^{\frac{p-1}2}i}
\cdot \s!=(-1)^{\frac{p^2-1}8}\cdot  \s!
\end{equation}
Comparing this with $ (p-1)!! =2^{\frac{p-1}2}\cdot \s!
$, we obtain a simple derivation that $2^\frac{p-1}2\equiv(-1)^{\frac{p^2-1}8}$. In the following we will also require Euler's criterion \cite[Ch. III]{Dav}: 2 is a quadratic residue if and only if $2^\frac{p-1}2\equiv1$.  Using this and  Legendre's symbol we get a basic property of Gaussian reciprocity which allows to write: 
\begin{equation}\label{Q:q}
(p-1)!!\equiv\left( \frac{2}{p}\right)\cdot \frac{p-1}{2}!\equiv (-1)^{\frac{p^2-1}8}\cdot \frac{p-1}{2}! \pmod p.
\end{equation}
\end{remark}

\section{The double factorial in the prime case}\label{Sdf}

If  $p$ is an odd prime, then  \eqref{R:wil} and \eqref{Q:q}  give 
\begin{equation}\label{extra} 
((p-1)!!)^2\equiv \left(\s!\right)^2
\equiv(-1)^{\frac{p+1}2}  \pmod p.
 \end{equation}
In particular, $(p-1)!! \equiv
 \pm 1\pmod p$ when $p\equiv 3 \pmod{4}$. The following result gives a more precise statement of this fact.

\begin{theorem}\label{T:3mod4}
Suppose that $p$ is an odd prime and  $p\equiv 3 \pmod{4}$. Then
$(p-1)!! \equiv
 (-1)^{\nu}\pmod p$,
where  ${\nu}$ denotes the number of nonquadratic residues $j$ with $2<j<\frac{p}2$. 
\end{theorem}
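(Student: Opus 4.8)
The plan is to funnel everything into a single count of quadratic nonresidues, starting from \eqref{Q:q}, which already gives $(p-1)!!\equiv\left(\frac 2p\right)\s!\pmod p$. Since $p\equiv 3\pmod 4$, the sign $(-1)^{\frac{p+1}2}$ appearing in \eqref{extra} is $+1$, so $\s!\equiv\pm1$ and $(p-1)!!\equiv\pm1$; the entire content of the theorem is to determine these signs and to recognise their product as $(-1)^\nu$.

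First I would evaluate $\s!$ modulo $p$. The crucial input is that the product of all quadratic residues in $\{1,\dots,p-1\}$ is $\equiv 1$: these residues are exactly $1^2,2^2,\dots,\left(\s\right)^2$, so their product is $\left(\s!\right)^2$, which is $\equiv 1$ by \eqref{R:wil} for $p\equiv 3\pmod 4$. Next I would split the nonzero residues according to whether they lie in the lower interval $\left(0,\frac p2\right)$ or the upper interval $\left(\frac p2,p\right)$ and exploit the reflection $r\mapsto p-r\equiv -r$. Because $-1$ is a nonresidue when $p\equiv 3\pmod 4$, this reflection interchanges residues and nonresidues, so the upper residues are precisely the numbers $p-s$ with $s\in\left(0,\frac p2\right)$ a nonresidue. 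Writing $M$ for the number of nonresidues in $\left(0,\frac p2\right)$ and regrouping the product of all residues into its lower and upper halves, each of the $M$ reflections contributes a factor $-1$, and I obtain $(-1)^M\,\s!\equiv 1$, that is $\s!\equiv(-1)^M\pmod p$.

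It then remains to fold in the factor $\left(\frac 2p\right)$ and to reconcile the exponent with the stated range $2<j<\frac p2$. By Euler's criterion $\left(\frac 2p\right)=(-1)^\varepsilon$, where $\varepsilon=1$ if $2$ is a nonresidue and $\varepsilon=0$ otherwise, so $(p-1)!!\equiv(-1)^{M+\varepsilon}$. Since $1$ is always a residue, $M$ is the number of nonresidues $j$ with $1\le j<\frac p2$, equivalently with $j\in\{2,3,\dots,\s\}$; removing the contribution of $j=2$, which is present exactly when $2$ is a nonresidue and hence equals $\varepsilon$, leaves the nonresidues $j$ with $2<j<\frac p2$. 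Thus $\nu=M-\varepsilon\equiv M+\varepsilon\pmod 2$, giving $(p-1)!!\equiv(-1)^\nu$. I expect the reflection bookkeeping in the middle step to be the main obstacle: one has to track the sign contributed by each pair $\{s,p-s\}$ correctly and to use the nonresidue status of $-1$ in an essential way, and it is precisely the Legendre factor $\left(\frac 2p\right)$ that accounts for the otherwise mysterious exclusion of $j=2$ in the statement.
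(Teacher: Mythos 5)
Your proof is correct and follows essentially the same route as the paper's: it starts from \eqref{Q:q}, evaluates $\frac{p-1}{2}!$ via the reflection $x\mapsto p-x$ (which swaps residues and nonresidues because $-1$ is a nonresidue for $p\equiv 3\pmod 4$), and then performs the same parity bookkeeping with $\left(\frac{2}{p}\right)$ to explain the exclusion of $j\le 2$. The only difference is that where the paper cites a reference for the fact that the product of all quadratic residues is $\equiv 1$ when $p\equiv 3\pmod 4$, you derive it directly from \eqref{R:wil} by writing that product as $\left(\frac{p-1}{2}!\right)^2$, which makes the argument self-contained.
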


\begin{proof} Once again, the congruences will be taken modulo $p$ unless otherwise stated. Let $s:=\frac{p-1}{2}$ and $\Z^+_p:= \{1,2,\dots, p-1\}$.  To evaluate the factor $s!$ in  \eqref{Q:q} we use an argument that Mordell attributed to Dirichlet  \cite{Mordell}. Consider the involution $\varphi: x\mapsto p-x$ on $\mathbb \Z^+_p$. Since $p\equiv 3 \pmod{4}$, $p$ cannot be expressed as a sum of two squares, and so  $\varphi$ interchanges  the set $QR$ of quadratic residues with the set $NR$ of nonquadratic residues. Let $QR=\{r_1,r_2,\dots,r_s\}$ and $NR=\{n_1,n_2,\dots,n_s\}$, with elements listed in natural order. Then
\[
s!= r_1r_2\dots r_{s-N}\ n_1n_2\dots n_{N}\equiv (-1)^N  r_1r_2\dots r_s,
\]
where $N$ denotes the number of nonquadratic residues $j<\frac{p}2$. As $p\equiv  3 \pmod 4$, one has $ r_1r_2\dots r_s\equiv 1$; see  \cite[p.~75]{Rose}. Hence $s!\equiv (-1)^N$.
Thus by Remark \ref{R:GR}, $(p-1)!!\equiv 1$ if and only if either $2$ is quadratic residue and $N$ is even, or $2$ is nonquadratic residue and $N$ is odd. Thus $(p-1)!!\equiv 1$ if and only if $\nu$ is 
even.
\end{proof}

When $p\equiv 1 \pmod{4}$, we have $((p-1)!!)^2\equiv -1\pmod p$. In this case, because $-1$ is a quadratic residue, the involution $\varphi$ used in the proof of Theorem \ref{T:3mod4} leaves the sets $QR$ and $NR$ invariant, so it is not useful. Instead we consider the involution $\psi: x\mapsto x^{-1}$ of $\mathbb \Z^+_p$. It turns out that this approach is applicable for all odd primes.

\begin{theorem}\label{T:oddprime}
Suppose that $p$ is an odd prime. Let ${\mu}$ denote the number of elements $j $ less than $\frac{p}2$ such that the inverse $j^{-1}$ of $j$ modulo $p$ is also less than $\frac{p}2$. 

\begin{enumerate}
\item If $p\equiv 3 \pmod{4}$, then
$(p-1)!! \equiv
 (-1)^{\frac{\mu+1}2}\pmod p$.
\item If $p\equiv 1 \pmod{4}$, then
$(p-1)!! \equiv
 (-1)^{\frac{\mu+1}2}i_p\pmod p$,
where  $i_p$ is the unique natural number  less than $\frac{p}2$ with $i_p^2\equiv-1 \pmod p$.
 \end{enumerate}
\end{theorem}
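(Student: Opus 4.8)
The plan is to reduce the statement to the evaluation of $s!$, where $s=\frac{p-1}{2}$, and then to compute $s!$ by grouping its factors according to the involution $\psi\colon x\mapsto x^{-1}$ of $\Z^+_p$. By \eqref{Q:q} we already have
\[
(p-1)!!\equiv(-1)^{\frac{p^2-1}8}\,s!\pmod p,
\]
so it suffices to determine $s!$ modulo $p$, sign included. Writing $S=\{1,2,\dots,s\}$, I would partition $S$ into the set $A$ of those $j$ whose inverse $j^{-1}$ again lies in $S$, and the set $B$ of those $j$ with $j^{-1}>\frac p2$, so that $|A|=\mu$ and $|B|=s-\mu$. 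The appeal of $\psi$ is that, unlike the involution $x\mapsto p-x$, it behaves uniformly for both residue classes of $p$ modulo $4$; only one fixed-point count at the end will distinguish the two cases.

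For $A$, the sole fixed point of $\psi$ in $S$ is $1$ (since $x=x^{-1}$ forces $x\equiv\pm1$, and the representative $p-1$ of $-1$ exceeds $\frac p2$), so the remaining $\mu-1$ elements fall into inverse pairs $\{j,j^{-1}\}$; in particular $\mu$ is odd, which is exactly what makes $\frac{\mu+1}2$ an integer. Each pair contributes $j\cdot j^{-1}\equiv1$, whence $\prod_{j\in A}j\equiv1$. For $B$, I would pair $j$ with $k=p-j^{-1}$. A short check shows $k\in S$, that $k$ again lies in $B$ (because $k^{-1}\equiv-j$, whose representative $p-j$ exceeds $\frac p2$), that $j\mapsto k$ is an involution of $B$, and that $jk\equiv-1$. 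Its fixed points are precisely the $j\in S$ with $j^2\equiv-1$, and this is where the cases split: for $p\equiv3\pmod4$ there are none, giving $\prod_{j\in B}j\equiv(-1)^{(s-\mu)/2}$, while for $p\equiv1\pmod4$ there is the single fixed point $i_p$, giving $\prod_{j\in B}j\equiv i_p(-1)^{(s-\mu-1)/2}$.

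Combining the two products yields $s!\equiv(-1)^{(s-\mu)/2}$ in case (a) and $s!\equiv i_p(-1)^{(s-\mu-1)/2}$ in case (b), and multiplying by $(-1)^{\frac{p^2-1}8}$ then delivers the claimed formulas. The main obstacle is not conceptual but the parity bookkeeping at the very end: one must verify that $\frac{p^2-1}8+\frac{s-\mu}2\equiv\frac{\mu+1}2\pmod2$ in case (a), and the analogous congruence in case (b). Writing $p=4m+3$ (so $\frac{p^2-1}8\equiv m+1$ and $s$ is odd) or $p=4m+1$ (so $\frac{p^2-1}8\equiv m$ and $s$ is even), and using that $\mu$ is odd, this reduces to checking that $\frac{p^2-1}8+\frac{s-1}2$ is odd when $p\equiv3\pmod4$ and that $\frac{p^2-1}8+\frac s2$ is even when $p\equiv1\pmod4$. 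I would present this reconciliation carefully, since an off-by-one in these exponents is the only way the argument could go wrong.
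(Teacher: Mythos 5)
Your proposal is correct and follows essentially the same route as the paper's own proof: the reduction $(p-1)!!\equiv(-1)^{\frac{p^2-1}8}\,\frac{p-1}{2}!$ via \eqref{Q:q}, the involution $\psi\colon x\mapsto x^{-1}$, the pairing of $j$ with $p-j^{-1}$ contributing $-1$ per pair, the fixed-point dichotomy between $p\equiv 3$ and $p\equiv 1 \pmod 4$, and the closing parity computation. Your explicit observation that $\mu$ is odd (via the sole fixed point $1$) is a small but welcome sharpening of a point the paper leaves implicit.
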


\begin{proof} We use the same notation as in the proof of Theorem \ref{T:3mod4}. 
Notice that if $j <\frac{p}2$ and $ j^{-1} <\frac{p}2$, then the two terms cancel in $\s!$. On the other hand, if $j <\frac{p}2$ and $ j^{-1} >\frac{p}2$, then $p-j^{-1} <\frac{p}2$ and provided $j\not=p-j^{-1}$, the product $j. (p-j^{-1})$ in $\s!$ gives $-1$. 

If $p\equiv 3 \pmod{4}$, the number $-1$ is not a quadratic residue and so  there is no $j <\frac{p}2$ with $j=p-j^{-1}$. In this case, $b=\s!\equiv  (-1)^{\frac{s-\mu}2}$, where $s=\frac{p-1}2$. Hence $(p-1)!!= ab\equiv  (-1)^{\frac{p^2-1}8+\frac{s-\mu}2}$. Let $p=4k+3$. Then $\frac{p^2-1}8+\frac{s}2
=2k^2+4k+ \frac32$. Hence $(p-1)!!\equiv  (-1)^{\frac{3-\mu}2}\equiv  (-1)^{\frac{\mu+1}2}$, as required.

 If $p\equiv 1 \pmod{4}$, we argue in the same manner, but now  $i_p$ is the unique number with  $i_p <\frac{p}2$ and $i_p=p-i_p^{-1}$. 
 Hence $\s!\equiv (-1)^w i_p$, where $w=\frac{s-\mu-1}2$.
Thus $(p-1)!!= ab\equiv  (-1)^{\frac{p^2-1}8+\frac{s-\mu-1}2}i_p$. Let $p=4k+1$. Then $\frac{p^2-1}8+\frac{s}2
=2k^2+2k$. Hence $(p-1)!!\equiv  (-1)^{\frac{\mu+1}2}i_p$, as required.\end{proof}

Together Theorems \ref{T:3mod4} and \ref{T:oddprime}  provide the following equivalence.

\begin{corollary}
If $p$ is an odd prime with $p\equiv 3 \pmod{4}$, then the number $\nu$, of nonquadratic residues $i$ with $2<i<\frac{p}2$, is even if and only if the number $\mu$ of elements $j $ less than $\frac{p}2$ such that the inverse $j^{-1}$ of $j$ modulo $p$ is also less than $\frac{p}2$, is congruent to 3 modulo 4.\end{corollary}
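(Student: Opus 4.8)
The plan is simply to juxtapose the two evaluations of the same quantity $(p-1)!!$ that are available in the case $p\equiv 3\pmod 4$. Theorem \ref{T:3mod4} gives $(p-1)!!\equiv(-1)^{\nu}\pmod p$, while Theorem \ref{T:oddprime}(a) gives $(p-1)!!\equiv(-1)^{(\mu+1)/2}\pmod p$. Since both right-hand sides represent the same residue class, I would begin by equating them to obtain $(-1)^{\nu}\equiv(-1)^{(\mu+1)/2}\pmod p$.

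Before drawing any conclusion I would check that the exponent $(\mu+1)/2$ is a genuine integer, i.e.\ that $\mu$ is odd. This is implicit in Theorem \ref{T:oddprime}, but it can be read off directly from its proof, where $\mu$ is shown to have the same parity as $s=\frac{p-1}{2}$; and $s$ is odd precisely because $p\equiv 3\pmod 4$ (writing $p=4k+3$ gives $s=2k+1$).

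The key step is then to pass from a congruence of signs to an equality of parities. Because $p$ is an odd prime, $1$ and $-1$ are distinct modulo $p$, so a congruence of the form $(-1)^a\equiv(-1)^b\pmod p$ forces $a\equiv b\pmod 2$. Applying this yields $\nu\equiv\frac{\mu+1}{2}\pmod 2$. From here the claimed equivalence is immediate: $\nu$ is even exactly when $\frac{\mu+1}{2}$ is even, and $\frac{\mu+1}{2}$ is even exactly when $\mu+1\equiv 0\pmod 4$, that is, when $\mu\equiv 3\pmod 4$.

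There is essentially no hard part here, since all the arithmetic content is carried by the two theorems already established; the corollary is a formal consequence of the fact that they evaluate the same double factorial in two different ways. The only point requiring a moment's care is the elementary observation that $1\not\equiv-1$ in $\Z_p$ for odd $p$, which is exactly what licenses upgrading the congruence of signs to the precise parity statement needed.
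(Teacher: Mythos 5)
Your proposal is correct and follows exactly the route the paper intends: the corollary is stated there as a direct consequence of juxtaposing Theorem \ref{T:3mod4} with Theorem \ref{T:oddprime}(a), equating $(-1)^{\nu}$ with $(-1)^{(\mu+1)/2}$ modulo $p$ and upgrading to a parity statement since $1\not\equiv -1 \pmod p$. Your additional check that $\mu$ is odd (so that the exponent is an integer) is a sensible bit of care that the paper leaves implicit.
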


\begin{remark}
The results of Theorems \ref{T:3mod4} and \ref{T:oddprime} can be expressed in terms of class field numbers $h$, but the resulting statements are not as succinct as those given above. For the $p\equiv 3 \pmod{4}$ case, one can use  $h(-p)=2N-1\pmod 4$; see  \cite{Mordell}. For $p\equiv 1 \pmod{4}$, the result can be expressed in terms of $h(p)$ and the fundamental unit of the associated real quadratic number field; see \cite{Ch}.
 \end{remark}

We can now give the connection between the  hyperfactorial and double factorial.

\begin{theorem}\label{hyp} For $p$ an odd prime, the  hyperfactorial and double factorial are connected by the relation
$\H(p-1)\equiv (-1)^{\frac{p-1}2}  (p-1)!!\pmod{p}$. 
\end{theorem}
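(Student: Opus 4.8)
The plan is to compute $\H(p-1)=\prod_{k=1}^{p-1}k^k$ modulo $p$ by pairing each factor $k^k$ with $(p-k)^{p-k}$, and then to compare the outcome with the expression for $(p-1)!!$ already recorded in \eqref{Q:q}. The two elementary ingredients are the congruence $p-k\equiv -k\pmod p$ and Fermat's little theorem in the form $k^p\equiv k\pmod p$.

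First I would treat a single pair. Writing $s:=\frac{p-1}2$, for $1\le k\le s$ we have
\[
k^k(p-k)^{p-k}\equiv k^k(-k)^{p-k}=(-1)^{p-k}k^{p}\equiv(-1)^{p-k}k\pmod p,
\]
using Fermat's little theorem in the last step. Since $p$ is odd, $(-1)^{p-k}=(-1)^{k+1}$. As $k$ runs from $1$ to $s$, the indices $k$ and $p-k$ together exhaust $\{1,2,\dots,p-1\}$, so multiplying these $s$ pairs gives
\[
\H(p-1)\equiv\prod_{k=1}^{s}(-1)^{k+1}k=(-1)^{\sum_{k=1}^{s}(k+1)}\,s!\pmod p.
\]

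The final step is the sign bookkeeping. I would rewrite the exponent as $\sum_{k=1}^{s}(k+1)=\frac{s(s+1)}2+s$ and invoke the identity $\frac{s(s+1)}2=\frac{p^2-1}8$, which holds because $s(s+1)=\frac{p-1}2\cdot\frac{p+1}2=\frac{p^2-1}4$. This yields $\H(p-1)\equiv(-1)^{s}(-1)^{\frac{p^2-1}8}s!$. Comparing with $(p-1)!!\equiv(-1)^{\frac{p^2-1}8}\,s!$ from \eqref{Q:q} gives $\H(p-1)\equiv(-1)^{s}(p-1)!!=(-1)^{\frac{p-1}2}(p-1)!!$, as claimed. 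I expect the only point requiring care to be the parity of $\sum_{k=1}^{s}(k+1)$; fortunately this matches $\frac{p^2-1}8$ \emph{exactly} rather than merely modulo $2$, so no separate case analysis on $p\bmod 4$ is needed and the factor $(-1)^{\frac{p-1}2}$ emerges cleanly.
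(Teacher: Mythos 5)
Your proof is correct, and it takes a genuinely different route from the paper. The paper never touches the product $\prod_k k^k$ directly: it converts $\H(p-1)$ into superfactorials via (the congruence form of) the identity $\H(p-1)\,\sf(p-2)\equiv 1 \pmod p$, applies Wilson's theorem to get $\H(p-1)\equiv -1/\sf(p-1)$, invokes Theorem \ref{SFDF} to replace $\sf(p-1)$ by $(p-1)!!$, and then uses \eqref{extra} to compute $((p-1)!!)^{-1}$, which forces a case split on $p\bmod 4$. You instead compute $\H(p-1)$ head-on by pairing $k^k$ with $(p-k)^{p-k}$, where Fermat's little theorem collapses each pair to $(-1)^{k+1}k$, and then the exact identity $\frac{s(s+1)}{2}=\frac{p^2-1}{8}$ lets you match signs against \eqref{Q:q} with no case analysis at all; your steps (the sign bookkeeping included) all check out. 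What the paper's route buys is the display of the chain $\H \to \sf \to {!!}$, reinforcing the theme that all these functions are congruent up to sign; what your route buys is independence from Theorem \ref{SFDF} and Wilson's theorem (you need only Fermat and \eqref{Q:q}), a uniform argument over both residue classes of $p\bmod 4$, and as a by-product the clean intermediate formula $\H(p-1)\equiv(-1)^{\frac{p-1}{2}}(-1)^{\frac{p^2-1}{8}}\,\frac{p-1}{2}!\pmod p$.
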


\begin{proof}
Using Fermat's Little Theorem and Wilson's Theorem, we have 
\begin{align*}
\H(p-1)=\prod_{k=1}^{p-1}k^k&= \frac{(\sf(p-1))^{p-1}}{\sf(p-2)}\\
&= \frac{(\sf(p-1))^{p-1}(p-1)!}{\sf(p-1)}\\
&\equiv \frac{-1}{\sf(p-1)}\equiv \frac{-1}{(p-1)!!},
\end{align*}
by Theorem \ref{SFDF}. By \eqref{extra}, we have 
\[
((p-1)!!)^{-1}\equiv \begin{cases}
(p-1)!!&: \ \text{if}\ p\equiv 3 \pmod{4}\\
-(p-1)!!&: \ \text{if}\ p\equiv 1 \pmod{4}.
\end{cases}
\]
Thus $\H(p-1)\equiv (-1)^{\frac{p-1}2} (p-1)!!$, as required.
\end{proof}

\begin{remark}
Note that by Proposition \ref{L:matrix}, Remark \ref{diag} and Theorems \ref{SFDF} and \ref{hyp},  modulo $p$ the determinant of the matrix $A$ of Section \ref{S:mot} is  $(-1)^{\frac{p-1}2}$ times the product of the elements on the main diagonal of $A$.  \end{remark}

\section{Composite numbers}

When $n$ is composite and $n\not=4$, one has $(n-1)!\equiv 0\pmod n$. Similarly, it is obvious that $\sf(n-1)\equiv 0\pmod n$ and $\H(n-1)\equiv 0\pmod n$ for all composite natural numbers $n$. For the double factorial, the situation is more nuanced. The case of odd composites is not difficult. 

\begin{theorem}\label{T:comp}
If $n$ is a composite odd natural number, then $(n-1)!!\equiv 0 \pmod n$ if $n>9$, while $8!! \equiv 6\pmod9$.\end{theorem}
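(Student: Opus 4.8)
The plan is to turn the double-factorial congruence into an ordinary divisibility statement and then argue by cases on the factorisation of $n$. Since $n$ is odd, $n-1$ is even, so formula \eqref{E:df} gives $(n-1)!!=2^{\sn}\,\sn!$. Because $n$ is odd, $2$ (and hence $2^{\sn}$) is a unit modulo $n$, so $(n-1)!!\equiv 0\pmod n$ if and only if $n\mid \sn!$. Writing $m=\frac{n-1}2$, the whole problem reduces to deciding when $n$ divides $m!$, and the exceptional behaviour at $n=9$ ought to come from $m!$ being too short to absorb all the prime factors of $n$.

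First I would treat the generic case, where $n$ is \emph{not} the square of a prime. Let $q$ be the smallest prime factor of $n$; as $n$ is odd and composite, $q\ge 3$ and $q\le\sqrt n$, so the cofactor $b=n/q$ satisfies $q\le b$, with equality only when $n=q^2$. Excluding that, $q<b$ are two distinct integers greater than $1$, and $b=n/q\le n/3\le\frac{n-1}2=m$, the last inequality holding for all $n\ge 3$. Thus $q$ and $b$ occur as two \emph{different} factors among $1,2,\dots,m$, whence $n=qb$ divides $m!$ and $(n-1)!!\equiv 0\pmod n$. Note that every odd composite which is not a prime square is automatically larger than $9$, so this case needs no exceptions.

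The remaining case is $n=p^2$ a prime square, and this is where the sole real obstacle lies: the two factors now coincide, so I cannot simply point to $p$ twice in $m!$. Instead I would locate the distinct multiples $p$ and $2p$, which both lie in $\{1,\dots,m\}$ provided $2p\le m=\frac{p^2-1}2$, i.e.\ $p^2-4p-1\ge 0$. This inequality holds for every prime $p\ge 5$, and for such $p$ the distinct factors $p$ and $2p$ already force $p^2\mid m!$, giving $(n-1)!!\equiv 0\pmod n$ exactly as before. The unique prime for which $p^2-4p-1\ge 0$ fails is $p=3$, that is $n=9$, and this is precisely what produces the anomaly: here $m=4$ and the multiple $2p=6$ overshoots $m$, so only one factor of $3$ survives in $4!$. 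The short direct computation $8!!=2^4\cdot 4!\equiv 7\cdot 6\equiv 6\pmod 9$ then finishes the proof.
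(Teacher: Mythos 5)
Your proof is correct and follows essentially the same route as the paper: reduce via $(n-1)!!=2^{\sn}\,\sn!$ to showing $n\mid\sn!$, exhibit two distinct factors below $\frac{n-1}2$ whose product absorbs $n$ (using $p$ and $2p$ for prime squares $p^2$ with $p\ge 5$), and settle $n=9$ by direct computation. The only difference is cosmetic: you merge the paper's two cases ``$n=ab$ with $a,b$ coprime'' and ``$n=p^k$, $k>2$'' into a single case using the smallest prime factor $q$ and its cofactor $n/q$, which is a mild streamlining of the same idea.
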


\begin{proof}
Let $n$ be a composite odd natural number. If $n=ab$, where $a,b$ are co-prime, then $a,b< \frac{n-1}2$, so $\sn!\equiv 0\pmod n$. So we may assume that $n$ is of the form $n=p^k$ for some odd prime $p$, where $k\geq 2$.
If $k>2$, then $p,p^{k-1}$ are distinct and $p,p^{k-1}< \frac{n-1}2$, so $\sn!\equiv 0\pmod n$. If $n=p^2$ and $p>3$, then $p,2p$ are distinct and $p,2p< \frac{n-1}2$, so once again $\sn!\equiv 0\pmod n$. It remains to consider $n=9$, which can be calculated by hand.
\end{proof}

However, when $n$ is even, the pattern is less obvious.

\begin{theorem}\label{T:even}
Suppose that $n=2^iá(2k+1)$, where  $i\geq 1$ and $k\geq 0$.

\begin{enumerate}

\item    if $i=1$, then $(n-1) !! \equiv  2k+1 \pmod{n}$,

\item   if $i=2$, then $(n-1)!! \equiv - (2k+1) \pmod{n}$,

\item  if $i>2$, then $(n-1)!! \equiv (2k+1)^{2^{i-2}} \pmod{n}$. 
\end{enumerate}
\end{theorem}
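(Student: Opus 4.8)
The plan is to reduce the whole computation to a product of units and then split the modulus by the Chinese Remainder Theorem. Since $n$ is even, $n-1$ is odd, so $(n-1)!!$ is precisely the product of all odd numbers in $\{1,\dots,n-1\}$. Writing $n=2^i\cdot m$ with $m:=2k+1$ odd, I would treat the two coprime factors $2^i$ and $m$ separately. The key combinatorial observation is how these odd numbers distribute into residue classes: in every block of $2^i$ consecutive integers there are $2^{i-1}$ odd numbers hitting the $2^{i-1}$ odd residue classes mod $2^i$ bijectively, so over $[1,n]$ each odd class is hit exactly $m$ times; and in every block of $2m$ consecutive integers the $m$ odd numbers form a complete residue system mod $m$.

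Modulo $m$ the conclusion is immediate: since $m$ is odd and $m<n$, the factor $m$ itself appears in $(n-1)!!$, whence $(n-1)!!\equiv 0\pmod m$ (the case $m=1$ being vacuous). Modulo $2^i$, the block count shows $(n-1)!!\equiv W^{m}\pmod{2^i}$, where $W$ is the product of all units modulo $2^i$. Here I would invoke Gauss's generalisation of Wilson's theorem for prime powers, namely $W\equiv 1\pmod 2$, $W\equiv -1\pmod 4$, and $W\equiv 1\pmod{2^i}$ for $i\geq 3$; the last holds because $(\Z/2^i)^\ast$ has three elements of order two, so the product of all its elements is trivial. As $m$ is odd, this gives $(n-1)!!$ congruent to $1$, $-1$, $1$ modulo $2$, $4$, $2^i$ in the cases $i=1$, $i=2$, $i\geq 3$ respectively.

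Finally I would recombine via CRT. For $i=1$ the answer is the unique odd multiple of $m$ below $2m$, namely $m=2k+1$, yielding (a). For $i\geq 3$ I would verify that $(2k+1)^{2^{i-2}}$ already satisfies both congruences: it is divisible by $m$, and it is $\equiv 1\pmod{2^i}$ because $2^{i-2}$ is the exponent of $(\Z/2^i)^\ast$ when $i\geq 3$; this identifies it as the CRT value and gives (c). The main obstacle is case (b). There one must select the multiple of $m$ that is $\equiv -1\pmod 4$, and which multiple this is depends on $m\bmod 4$: the value is $-m$ when $m\equiv 1\pmod 4$ but $+m$ when $m\equiv 3\pmod 4$. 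So the delicate point is the sign determination, and the clean expression $-(2k+1)$ should be checked carefully against the residue of $2k+1$ modulo $4$ (equivalently, the parity of $k$), since that is exactly where the CRT recombination is not sign-uniform.
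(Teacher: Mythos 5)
Your strategy is essentially the paper's own: split $\Z_n\cong\Z_{2^i}\times\Z_{2k+1}$ by the Chinese Remainder Theorem, observe that $(n-1)!!$ is $\equiv 0$ modulo the odd part $m=2k+1$ while modulo $2^i$ it reduces to the product $W$ of all units of $\Z_{2^i}$ (your block count gives $W^m$, harmless since $W=\pm1$ and $m$ is odd), evaluate $W$ by Gauss's generalisation of Wilson's theorem, and recombine. The only mechanical difference is in the recombination: the paper inverts the CRT isomorphism with an explicit B\'ezout coefficient $a$ satisfying $a(2k+1)+b2^i=1$, whereas you identify the recombined value by checking both congruences directly; for (a) and (c) the two computations agree, and your justification that $(2k+1)^{2^{i-2}}\equiv 1\pmod{2^i}$ via the exponent of the unit group of $\Z_{2^i}$ is exactly the fact the paper needs.

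Your caution about case (b) is not a gap in your argument --- it is an error in the theorem as stated. The residue that is $\equiv 0\pmod{2k+1}$ and $\equiv -1\pmod 4$ is $-(2k+1)^2 \bmod n$, and indeed the paper's own proof ends with $(n-1)!!\equiv -a(2k+1)=-(2k+1)^2$; the theorem statement then replaces $-(2k+1)^2$ by $-(2k+1)$, which is legitimate only when $4(2k+1)$ divides $(2k+1)\cdot 2k$, i.e.\ when $k$ is even. When $k$ is odd, so that $2k+1\equiv 3\pmod 4$, the correct value is $+(2k+1)$, exactly as you determined. A concrete counterexample to (b) as printed: for $n=12$ one has $11!!=10395\equiv 3\pmod{12}$, whereas (b) predicts $-3\equiv 9\pmod{12}$. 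The uniform correct statement is $(n-1)!!\equiv -(2k+1)^2\pmod n$, equivalently $(-1)^{k+1}(2k+1)$, and your analysis establishes precisely this.
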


\begin{proof} We first treat the case $k=0$. Recall that for an arbitrary integer $n$,  Gauss' generalisation of Wilson's Theorem \cite[Chap.~III]{Dick1} (see also \cite{AC2}) states that if $I$ denotes the set of invertible elements in $\Z_n$, then
\[
\prod_{s\in I} s \equiv 
\begin{cases}-1&:\ \text{if}\  n=4,p^\alpha,2p^\alpha\ (p\ \text{an odd prime})\\
1&:\ \text{otherwise}
\end{cases}
\ \pmod{n}.
\]
For $n=2^i$ the set of odd elements of $\Z_{n}$ is precisely the group of invertible elements in $\Z_n$. Hence Gauss' result gives
\[
(n-1)!!\equiv \begin{cases}
-1&:\ \text{if}\ i=2\\
1&:\ \text{otherwise},
\end{cases} \ \pmod{n}.
\]
as required.

Now assume $k>0$. By the Chinese remainder theorem, the map
\begin{align*}
\varphi : \Z_n\to \Z_{2^i}\times \Z_{2k+1}\\
m\mapsto (\varphi_1(m),\varphi_2(m))
\end{align*}
is a ring isomorphism, where $\varphi_1(m)$ (resp.~$\varphi_2(m)$)  is the reduction of $m$ modulo $2^i$ (resp.~$2k+1$). The inverse map is given by
\[
\varphi^{-1}(x,y)\equiv by2^i+ax(2k+1) \pmod n.
\]
where $a(2k+1)+b2^i=1$. The numbers $a$ and $b$ are defined modulo $n$. The proof focuses on the value of $a$. For $i=1$ we may take $a=1$. For $i=2$, we can take $a= (2k+1)$. For $i\geq 3$, note that as the set of odd elements of $\Z_{2^i}$ is the group of invertible elements in $\Z_{2^i}$, so by Euler's Theorem,  $(2k+1)^{2^{i-2}}\equiv 1 \pmod{2^i}$. Thus we may take $a=(2k+1)^{2^{i-3}}$ when $i\geq 3$.
We have
\[
\varphi((n-1)!!)=\prod_{\substack{x\in \Z_{2^i},\ y\in\Z_{2k+1}\\x\ \text{odd}}} (x,y)=\left(\prod_{\substack{x\in \Z_{2^i}\\x\ \text{odd}}} x, \prod_{y\in\Z_{2k+1}} y\right)=\left((2^i-1)!!, 0\right).
\]
Now from the $k=0$ case treated above,
\[
(2^i-1)!!\equiv\begin{cases}
-1&:\ \text{if}\ i=2\\
1&:\ \text{otherwise}.
\end{cases}\ \pmod{2^i}.
\]
So when $i=1$ we have
\[
(n-1)!!=\varphi^{-1}(1,0)\equiv a(2k+1)=(2k+1).
\]
When $i=2$ we have
\[
(n-1)!!=\varphi^{-1}(-1,0)\equiv -a(2k+1)=-(2k+1)^2.
\]
Finally, for $i\geq 3$ we have
\[
(n-1)!!=\varphi^{-1}(1,0)\equiv a(2k+1)=(2k+1)^{2^{i-2}}.
\]
\end{proof}

\section{The subfactorial}

For the (standard) factorial, the double factorial, the hyperfactorial and the superfactorial, the value at number $n$ is congruent to zero mod $n$. It is in part for this reason that the value at $n-1$ is of interest mod $n$. For the subfactorial however, the situation is different. Here the natural generalisation of Wilson's Theorem is the following.

\begin{theorem}\label{sub}
If $n$ is a natural number, then $!n\equiv (-1)^n \pmod n$.
\end{theorem}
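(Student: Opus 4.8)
The plan is to use the explicit formula \eqref{ELsub} for the subfactorial, namely $!n=n!\sum_{i=0}^n \frac{(-1)^i}{i!}$, and reduce the claim modulo $n$. First I would rewrite the sum by clearing denominators: since $!n=\sum_{i=0}^n (-1)^i \frac{n!}{i!}$, each term $\frac{n!}{i!}=n(n-1)\cdots(i+1)$ is an integer, and I would examine which of these terms survive modulo $n$. The key observation is that $\frac{n!}{i!}$ contains the factor $n$ whenever $i\leq n-1$, so every term with $i\leq n-1$ vanishes modulo $n$. The only surviving term is $i=n$, which gives $(-1)^n \frac{n!}{n!}=(-1)^n$.

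The main step, then, is to justify that all terms with $i<n$ are divisible by $n$. For $0\leq i\leq n-1$ the product $\frac{n!}{i!}=n(n-1)\cdots(i+1)$ visibly has $n$ as a factor, so $\frac{n!}{i!}\equiv 0\pmod n$ for each such $i$. This is immediate and requires no case analysis. Hence
\begin{equation*}
!n=\sum_{i=0}^n (-1)^i \frac{n!}{i!}\equiv (-1)^n \cdot 1 \equiv (-1)^n \pmod n,
\end{equation*}
which is exactly the claim.

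I expect the argument to be almost entirely routine; the only point that deserves a word of care is the starting identity itself. If one prefers not to invoke \eqref{ELsub}, the same conclusion follows from the recurrence $!n=n\cdot(!(n-1))+(-1)^n$, which holds for $n\geq 1$: reducing this modulo $n$ kills the term $n\cdot(!(n-1))$ and leaves $!n\equiv(-1)^n\pmod n$ directly, with no summation needed. Either route reaches the result in one line once the relevant identity is in hand.

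The only genuine obstacle is bookkeeping at the boundary values, in particular confirming the convention $!0=1$ (the empty permutation fixes no element vacuously) so that the statement holds at $n=0$ or is understood to begin at $n=1$, and checking that the recurrence is valid starting from $!1=0$. These are trivial to verify by hand and do not affect the core computation.
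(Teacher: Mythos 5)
Your proof is correct and is essentially the same as the paper's: both start from the explicit formula \eqref{ELsub}, observe that every term $\frac{n!}{i!}$ with $i\leq n-1$ is divisible by $n$, and conclude that only the $i=n$ term survives, giving $(-1)^n$. Your alternative via the recurrence $!n=n\cdot(!(n-1))+(-1)^n$ is a fine bonus, but the main argument matches the paper's one-line computation.
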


\begin{proof} 
Modulo $n$ we have from \eqref{ELsub}
\begin{equation*}
!n=n!\sum_{i=0}^n \frac{(-1)^i}{i!}=(-1)^n+n!\sum_{i=0}^{n-1} \frac{(-1)^i}{i!}\equiv (-1)^n \pmod n.
\end{equation*}
\end{proof}

\bibliographystyle{amsplain}
\bibliography{factorials}

\providecommand{\bysame}{\leavevmode\hbox to3em{\hrulefill}\thinspace}
\providecommand{\MR}{\relax\ifhmode\unskip\space\fi MR }
\providecommand{\MRhref}[2]{%
  \href{http://www.ams.org/mathscinet-getitem?mr=#1}{#2}
}
\providecommand{\href}[2]{#2}
\begin{thebibliography}{10}

\bibitem{AC1}
Christian Aebi and Grant Cairns, \emph{Catalan numbers, primes, and twin
  primes}, Elem. Math. \textbf{63} (2008), no.~4, 153--164.

\bibitem{AC2}
\bysame, \emph{A property of twin primes}, Integers \textbf{12} (2012), \#A7.

\bibitem{Ba}
E.~W. Barnes, \emph{The theory of the {$G$}-{F}unction}, Quart. J. Pure Appl.
  Math \textbf{31} (1900), 264--314.

\bibitem{BBBG}
D.~Borwein, J.~M. Borwein, P.~B. Borwein, and R.~Girgensohn, \emph{Giuga's
  conjecture on primality}, Amer. Math. Monthly \textbf{103} (1996), no.~1,
  40--50.

\bibitem{Caj}
Florian Cajori, \emph{History of mathematical notations vol.~2}, The Open Court
  Publishing Company, 1929.

\bibitem{Ch}
S.~Chowla, \emph{On the class number of real quadratic fields}, Proc. Nat.
  Acad. Sci. U.S.A. \textbf{47} (1961), 878.

\bibitem{cd2}
John~B. Cosgrave and Karl Dilcher, \emph{An introduction to {G}auss
  factorials}, Amer. Math. Monthly \textbf{118} (2011), no.~9, 812--829.

\bibitem{Dav}
Harold Davenport, \emph{The higher arithmetic : an introduction to the theory
  of numbers}, Hutchinson, London, 1952.

\bibitem{DeM}
Augustus De~Morgan, \emph{Symbols and notation}, Penny Cyclopaedia Vol. 23,
  1842, pp.~442 -- 445.

\bibitem{Dick1}
Leonard~Eugene Dickson, \emph{History of the theory of numbers. {V}ol. {I}:
  {D}ivisibility and primality}, Chelsea Publishing Co., New York, 1966.

\bibitem{Gl}
J.W.L. Glaisher, \emph{On the product $1^12^23^3\ldots n^n$}, Messenger of
  Math. \textbf{7} (1877-78), 43--47.

\bibitem{GQ}
Henry Gould and Jocelyn Quaintance, \emph{Double fun with double factorials},
  Math. Mag. \textbf{85} (2012), no.~3, 177--192.

\bibitem{GKP}
Ronald~L. Graham, Donald~E. Knuth, and Oren Patashnik, \emph{Concrete
  mathematics}, second ed., Addison-Wesley Publishing Company, Reading, MA,
  1994, A foundation for computer science.

\bibitem{Ki}
Hermann Kinkelin, \emph{Ueber eine mit der {G}ammafunction verwandte
  {T}ranscendente und deren {A}nwendung auf die {I}ntegralrechnung}, J. Reine
  Angew. Math. \textbf{57} (1860), 122--138.

\bibitem{Kr}
Christian Kramp, \emph{El\'emens d'arithm\'etique universelle}, Hansen, 1808.

\bibitem{La}
J.~L. Lagrange, \emph{D\'emonstration d'un th\'eor\`eme nouveau concernant les
  nombres premiers}, Nouveaux m\'emoires de l'Acad\'emie royale des sciences et
  belles-lettres de Berlin (1770), 123--133.

\bibitem{Me}
B.~E. Meserve, \emph{Classroom {N}otes: {D}ouble {F}actorials}, Amer. Math.
  Monthly \textbf{55} (1948), no.~7, 425--426.

\bibitem{Mordell}
L.~J. Mordell, \emph{The congruence {$(p-1/2)!\equiv \pm 1$} {$({\rm mod}$}
  {$p)$}}, Amer. Math. Monthly \textbf{68} (1961), 145--146.

\bibitem{Ore}
Oystein Ore, \emph{Number {T}heory and {I}ts {H}istory}, McGraw-Hill Book
  Company, Inc., New York, 1948.

\bibitem{Pr}
V.~V. Prasolov, \emph{Problems and theorems in linear algebra}, Translations of
  Mathematical Monographs, vol. 134, American Mathematical Society, Providence,
  RI, 1994, Translated from the Russian manuscript by D. A. Le{\u\i}tes.

\bibitem{Ra}
Roshdi Rashed, \emph{Ibn al-{H}aytham et le th\'eor\`eme de {W}ilson}, Arch.
  Hist. Exact Sci. \textbf{22} (1980), no.~4, 305--321.

\bibitem{Rose}
H.~E. Rose, \emph{A course in number theory}, second ed., Oxford Science
  Publications, The Clarendon Press Oxford University Press, New York, 1994.

\bibitem{Sl}
N.~J.~A. Sloane, \emph{A handbook of integer sequences}, Academic Press, New
  York-London, 1973.

\bibitem{St}
John Stillwell, \emph{Elements of number theory}, Undergraduate Texts in
  Mathematics, Springer-Verlag, New York, 2003.

\bibitem{Whit}
W.~Allen Whitworth, \emph{Sub-factorial {$N$}}, Messenger of Math. \textbf{7}
  (1877-78), 145--147.

\end{thebibliography}

\end{document}